\newtheorem{theorem}{Theorem}[section]
\newtheorem{lemma}[theorem]{Lemma}
\newtheorem{corollary}[theorem]{Corollary}
\theoremstyle{definition}
\newtheorem{definition}[theorem]{Definition}
\numberwithin{equation}{section}
\newcommand{\CC}{\mathbb C}
\newcommand{\HH}{\mathbb H}
\newcommand{\cD}{\mathcal D}
\newcommand{\cA}{\mathcal A}
\newcommand{\PP}{\mathbb P}
\newcommand{\QQ}{\mathbb Q}
\newcommand{\RR}{\mathbb R}
\newcommand{\ZZ}{\mathbb Z}
\newcommand{\SL}{\mathop{\mathrm {SL}}\nolimits}
\newcommand{\Orth}{\mathop{\null\mathrm {O}}\nolimits}
\newcommand{\II}{\mathop{\mathrm {II}}\nolimits}
\def\Borch{\mathbf{B}}
\begin{document}

\title[On the non-existence of singular Borcherds products]{On the non-existence of singular Borcherds products}

\author{Haowu Wang}

\address{School of Mathematics and Statistics, Wuhan University, Wuhan 430072, Hubei, China}

\email{haowu.wangmath@whu.edu.cn}

\author{Brandon Williams}

\address{Lehrstuhl A für Mathematik, RWTH Aachen, 52056 Aachen, Germany}

\curraddr{Institute of Mathematics, University of Heidelberg, 69120 Heidelberg, Germany}

\email{bwilliams@mathi.uni-heidelberg.de}

\subjclass[2020]{11F22, 11F27, 11F55}

\date{\today}

\keywords{Automorphic products, rational quadratic divisors, orthogonal modular forms, singular weight, Borcherds--Kac--Moody algebras}

\begin{abstract} 
Let $l\geq 3$ and $F$ be a modular form of weight $l/2-1$ on $\mathrm{O}(l,2)$ which vanishes only on rational quadratic divisors. We prove that $F$ has only simple zeros and that $F$ is anti-invariant under every reflection fixing a quadratic divisor in the zeros of $F$. In particular, $F$ is a reflective modular form. As a corollary, the existence of $F$ leads to $l\leq 20$ or $l=26$, in which case $F$ equals the Borcherds form on $\II_{26,2}$. This answers a question posed by Borcherds in 1995. 
\end{abstract}

\maketitle

\section{Introduction}

In this paper we prove some nice properties of holomorphic automorphic products of singular weight on orthogonal groups $\Orth(l,2)$, and resolve an open problem posed by Borcherds in 1995.  

Let $M$ be an even integral lattice of signature $(l,2)$ with $l\geq 3$. Let $(-,-)$ denote the bilinear form on $M$ and let $M'$ be the dual lattice of $M$. Either of the two connected components of the space
\begin{equation*}
\{[\mathcal{Z}] \in  \PP(M\otimes \CC):  (\mathcal{Z}, \mathcal{Z})=0, (\mathcal{Z},\bar{\mathcal{Z}}) < 0\}
\end{equation*}
defines the attached Hermitian symmetric domain $\cD(M)$; we fix a component once and for all.  
We denote by $\Orth^+ (M)$ the subgroup of the orthogonal group of $M$ that preserves $\cD(M)$. Let $k$ be an integer, $\Gamma$ be a finite-index subgroup of $\Orth^+(M)$ and $\chi$ be a character of $\Gamma$. A \textit{modular form} of weight $k$ and character $\chi$ for $\Gamma$ is a holomorphic function $F$ on the affine cone $$\cA(M) = \{\mathcal{Z} \in \PP(M\otimes \CC): [\mathcal{Z}] \in \cD(M)\}$$ over $\cD(M)$ which satisfies
\begin{align*}
F(t\mathcal{Z})&=t^{-k}F(\mathcal{Z}), \quad \forall t \in \CC^\times,\\
F(g\mathcal{Z})&=\chi(g)F(\mathcal{Z}), \quad \forall g\in \Gamma.
\end{align*}
The symmetric domain $\cD(M)$ can be realized as a tube domain around any cusp. The action of $\Orth^+(M)$ on the tube domain induces an automorphy factor, with respect to which one can also define modular forms of half-integral weight. If $F$ is nonzero, then either $k=0$ (in which case $F$ is constant), or $k\geq l/2-1$. The smallest possible weight $l/2-1$ of a non-constant modular form is called the \textit{singular} weight. On any tube domain realization, modular forms can be expanded into Fourier series, and singular-weight modular forms are characterized by the fact that their Fourier series is supported only on norm-zero vectors.

In 1995 and 1998 Borcherds \cite{Bor95, Bor98} established a remarkable lift to construct orthogonal modular forms with nice properties. The Borcherds lift is a multiplicative map and its input $f$ is a weakly holomorphic modular form of weight $1-l/2$ for the Weil representation of $\SL_2(\ZZ)$ attached to $M'/M$. The image $\Borch(f)$ is a meromorphic modular form for the discriminant kernel
$$
\widetilde{\Orth}^+(M) = \{ g \in \Orth^+(M) : g(v)-v \in M, \quad \text{for all $v\in M'$} \}
$$
whose divisor is a linear combination of rational quadratic divisors
$$
\lambda^\perp = \{ [\mathcal{Z}] \in \cD(M) : (\mathcal{Z}, \lambda)=0 \}, \quad \text{for $\lambda\in M$ with  $\lambda^2>0$.}
$$
The function $\Borch(f)$ has an infinite product expansion around any cusp, so we call it a \textit{Borcherds product} or an \textit{automorphic product}.   

Holomorphic Borcherds products of singular weight are very exceptional. They are usually the denominators of generalized Kac--Moody algebras that have a natural construction as the BRST cohomology related to a superconformal field theory (e.g. \cite{Bor90,Bor92, Sch00}). For example, the fake monster Lie algebra, which was constructed by Borcherds \cite{Bor90} in 1990, is the BRST cohomology related to the Leech lattice vertex operator algebra. This infinite dimensional Lie algebra describes the physical states of a bosonic string moving on a 26-dimensional torus, and its denominator is given by the singular-weight Borcherds product $\Phi_{12}$ on $\II_{26,2}$, the even unimodular lattice of signature $(26,2)$. 
Motivated by this connection, in 1995 Borcherds \cite[Open problem 3 in Section 17]{Bor95} posed the following problems:

\vspace{2mm}

\begin{enumerate}
    \item Are there a finite or infinite number of singular-weight modular forms which can be written as automorphic products?
    \item Are there holomorphic automorphic products of singular weight on $\Orth(l,2)$ when $l>26$? 
\end{enumerate}

\vspace{2mm}

These problems have remained open, although there are some partial results in the literature. Dittmann, Hagemeier and Schwagenscheidt \cite{DHS15} and Opitz and Schwagenscheidt \cite{OS19} classified holomorphic Borcherds products of singular weight on simple lattices (i.e. lattices on which there is no obstruction to constructing Borcherds products). Note that there are only finitely many simple lattices and a full classification was given in \cite{BEF16}. Scheithauer \cite{Sch17} classified singular Borcherds products on unimodular lattices, and derived a conditional bound on the signature of lattices of prime level with singular Borcherds products. 

This paper gives a complete solution to problem (2) and a weak solution to problem (1). To state the main results more conveniently, we introduce the following definition.

\begin{definition}
A non-constant modular form for $\Gamma<\Orth^+(M)$ is called \textit{special} if its zero divisor is a linear combination of quadratic divisors $\lambda^\perp$ for $\lambda\in M\otimes\QQ$ with $\lambda^2>0$. 
\end{definition}

Bruinier's converse theorem \cite{Bru02, Bru14} shows that every special modular form for the discriminant kernel of $M$ is a Borcherds product on $M$ if $M$ splits as $U\oplus U(m)\oplus L$, where $U$ is the unique even unimodular lattice of signature $(1,1)$. It is not known whether this holds for arbitrary $M$ (of signature $(l, 2)$ with $l \ge 3$).

In this paper, we first describe zeros of special modular forms of singular weight. The proof follows from studying the Laplace operator on the tube domain around any cusp. 

\begin{theorem}\label{MTH1}
Let $M$ be an even lattice of signature $(l,2)$ with $l\geq 3$ and $F$ be a special modular form of singular weight for a finite-index subgroup of $\Orth^+(M)$. Then $F$ has only simple zeros. Moreover, if $F$ vanishes on the quadratic divisor $\lambda^\perp$, then we have 
$$
F(\sigma_\lambda(\mathcal{Z})) = - F(\mathcal{Z}),
$$
where $\sigma_\lambda$ is the reflection fixing $\lambda^\perp$ defined as
$$
\sigma_\lambda(v) = v -\frac{2(\lambda,v)}{(\lambda,\lambda)}\lambda, \quad v\in M\otimes\QQ.  
$$
\end{theorem}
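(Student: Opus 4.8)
The plan is to pass to a tube domain realization around a cusp, where the statement becomes a concrete assertion about the Fourier expansion of $F$, and then exploit the key property of singular weight: the Fourier coefficients are supported on norm-zero vectors. Fix a primitive isotropic vector $z \in M$ (a cusp) and a second isotropic $z'$ with $(z,z') = 1$; writing $L = (M \cap z^\perp)/\ZZ z$, a lattice of signature $(l-1,1)$, the domain $\cD(M)$ is identified with the tube domain $\cH_L = \{Z = X + iY \in L\otimes\CC : Y \in \text{(positive cone)}\}$, and $F$ has an expansion $F(Z) = \sum_{\ell \in L'} c(\ell)\, e^{2\pi i (\ell, Z)}$, where $L'$ is the dual of $L$ (possibly with a denominator coming from the character, but this does not affect the argument). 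The singular-weight hypothesis, as recalled in the introduction, forces $c(\ell) \neq 0$ only when $(\ell,\ell) = 0$.

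The main computational input is the behavior of $F$ under the $\CC$-valued invariant Laplace operator $\Omega$ on the tube domain. On functions of the form $e^{2\pi i (\ell, Z)}$ one computes $\Omega\, e^{2\pi i (\ell, Z)} = c_l \cdot Q(\ell, Y)\cdot e^{2\pi i (\ell, Z)}$ for an explicit quadratic expression $Q$ in $\ell$ with coefficients depending on $Y = \im Z$; when $(\ell,\ell)=0$ this expression collapses (the "$Y$-dependent" part cancels against the norm term) so that $\Omega F$ is again a modular form, of weight $l/2 + 1$, whose Fourier coefficients are again supported on isotropic vectors — but now a weight-counting argument shows such a form must be identically zero, i.e. $F$ is (pluri)harmonic, $\Omega F = 0$. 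This is the technical heart; the bookkeeping with the automorphy factor on the tube domain and the precise normalization of $\Omega$ is where I expect the real work to lie, and it is the step most likely to hide subtleties (in particular one must make sure $\Omega F$ really is holomorphic/modular of the claimed weight, not merely annihilated pointwise).

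Granting harmonicity, I would localize near a quadratic divisor $\lambda^\perp$ appearing in the zero divisor. After an $\Orth^+(M\otimes\QQ)$-change of coordinates I may assume $\lambda$ is one of the coordinate directions, so that $\lambda^\perp$ is locally $\{w = 0\}$ for a coordinate $w$ and the reflection $\sigma_\lambda$ is $w \mapsto -w$. Write $F = w^m \cdot G$ with $G$ holomorphic and not identically zero on $\{w=0\}$, where $m \geq 1$ is the vanishing order. The harmonic equation $\Omega F = 0$, restricted to $\{w = 0\}$ and combined with the product structure, gives a differential relation forcing $m(m-1) = 0$ together with the vanishing of a mixed term; since $m \geq 1$ this yields $m = 1$, proving that all zeros are simple. (Concretely: the Laplacian of $w^m G$ contains a term $m(m-1)w^{m-2}|\partial w|^2 G$ whose leading behavior along $w=0$ cannot be cancelled unless $m \le 1$; the positivity of the metric on the normal direction is what makes this work, and checking that the relevant metric coefficient is nonzero on $\lambda^\perp$ is a small point to verify.)

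Finally, for the reflection anti-invariance: since $F$ vanishes to order exactly $1$ along $w = 0$, the quotient $F(Z)/F(\sigma_\lambda(Z))$ is a well-defined meromorphic — in fact, by simplicity of zeros on both sides, holomorphic and nowhere-zero — modular-type function near $\lambda^\perp$, and a standard argument (both $F$ and $F\circ\sigma_\lambda$ are modular of the same weight for a common finite-index group, their ratio is a weight-zero modular function, bounded, hence locally constant, and the constant is $\pm 1$ because $\sigma_\lambda^2 = \mathrm{id}$; the constant is $-1$ rather than $+1$ precisely because $F$ changes sign to first order across its zero locus $\{w=0\}$). This gives $F(\sigma_\lambda(\mathcal{Z})) = -F(\mathcal{Z})$ on a neighborhood, and hence everywhere by analytic continuation. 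The one place to be careful here is ensuring $\sigma_\lambda$ (which lies in $\Orth^+(M\otimes\QQ)$, not necessarily in the original group $\Gamma$) still acts compatibly — but $F\circ\sigma_\lambda$ is modular for the conjugate group $\sigma_\lambda \Gamma \sigma_\lambda$, and intersecting with $\Gamma$ leaves a finite-index subgroup, which suffices to run the bounded-modular-function argument.
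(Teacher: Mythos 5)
Your strategy for the first assertion (simple zeros) is essentially the paper's: pass to a tube domain, use a second-order operator that annihilates singular-weight forms, and read off $m(m-1)=0$ from the leading Taylor coefficient along the divisor. Two caveats there. First, the operator you describe --- an ``invariant'' Laplacian whose eigenvalue on $e^{2\pi i(\ell,Z)}$ depends on $Y=\im Z$ --- is the wrong tool as stated: the invariant K\"ahler Laplacian $\sum g^{i\bar j}\partial_i\partial_{\bar j}$ annihilates \emph{every} holomorphic function and cannot see the vanishing order. What makes the argument clean is the \emph{holomorphic} Laplace operator $\mathbf{\Delta}=\tfrac12\sum s^{ij}\partial_{e_i}\partial_{e_j}$ on the tube domain, whose eigenvalue $-2\pi^2(\ell,\ell)$ has no $Y$-dependence, so that singular weight gives $\mathbf{\Delta}F=0$ term by term and $\mathbf{\Delta}(w^mG)=\tfrac{m(m-1)}{2(\lambda,\lambda)}w^{m-2}G+O(w^{m-1})$ directly. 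Second, for $\lambda^\perp$ to become a linear slice $\{w=0\}$ you must choose the cusp $c$ with $(c,\lambda)=0$ (otherwise the divisor is quadratic in the tube coordinates); the existence of such a cusp needs the observation that a singular-weight form forces $M$ to contain an isotropic plane. These are repairable, but they are exactly where you flagged ``subtleties,'' and they do need to be done.

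The anti-invariance step has a genuine gap. You argue that $F/(F\circ\sigma_\lambda)$ is a bounded weight-zero modular function, hence constant. But you only establish holomorphy and non-vanishing of this ratio \emph{near} $\lambda^\perp$ (where both numerator and denominator vanish to order exactly one). Globally, its divisor is $\operatorname{div}F-\sigma_\lambda(\operatorname{div}F)$, and there is no a priori reason for $\operatorname{div}F$ to be $\sigma_\lambda$-stable: $\sigma_\lambda$ fixes $\lambda^\perp$ but sends the other components $\mu^\perp$ to $(\sigma_\lambda\mu)^\perp$, which need not occur in $\operatorname{div}F$. So the ratio is in general only meromorphic of weight zero, and such functions are far from constant; asserting its boundedness essentially presupposes the $\sigma_\lambda$-invariance of the divisor, which is a consequence of the identity $F\circ\sigma_\lambda=-F$ you are trying to prove. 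The paper's route avoids this entirely and stays local: set $G=F+F\circ\sigma_\lambda$. Since both summands are killed by $\mathbf{\Delta}$ (the operator intertwines the weight-$(l/2-1)$ slash action) and since the order-$0$ and order-$1$ Taylor coefficients of $G$ along $\{w=0\}$ cancel by simplicity of the zero, one has $G=O(w^2)$ with $\mathbf{\Delta}G=0$; the same leading-coefficient recursion then forces every Taylor coefficient of $G$ to vanish, so $G\equiv 0$ by analytic continuation. I would replace your global ratio argument with this local one.
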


Following Borcherds \cite{Bor98} and Gritsenko--Nikulin \cite{GN98}, we define reflective modular forms. 

\begin{definition}
Let $F$ be a special modular form for a finite-index subgroup of $\Orth^+(M)$. If $\sigma_\lambda$ lies in $\Orth^+(M)$ whenever $F$ vanishes on $\lambda^\perp$, then $F$ is called \textit{reflective}. 
\end{definition}

As a corollary of Theorem \ref{MTH1}, we prove the following result, which is a conjecture formulated by the first named author in 2019 (see \cite[Conjecture 9.8]{Wan19}). 

\begin{theorem}\label{MTH2}
Let $M$ be an even lattice of signature $(l,2)$ with $l\geq 3$ and $F$ be a special modular form of singular weight for a finite-index subgroup $\Gamma$ of $\Orth^+(M)$. Then there exists an even lattice $\mathbb{M}$ on $M\otimes\QQ$ such that $\Gamma < \Orth^+(\mathbb{M})$ and $F$ is a reflective modular form on $\mathbb{M}$. 
\end{theorem}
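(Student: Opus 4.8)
The plan is to enlarge $\Gamma$ by the reflections that Theorem~\ref{MTH1} attaches to the zeros of $F$, to prove that the enlarged group still acts properly discontinuously on $\cD(M)$, and then to take for $\mathbb{M}$ a lattice stabilised by it. To begin, let $Z\subset\cD(M)$ be the zero locus of $F$; if $Z=\emptyset$ we are done with $\mathbb{M}=M$, so assume $Z\neq\emptyset$. Because $F$ is special, $Z$ is, as a divisor on the complex manifold $\cD(M)$, a locally finite sum of quadratic divisors; let $\mathcal H$ be its (locally finite) set of irreducible components, each of the form $\lambda^\perp$ with $\lambda\in M\otimes\QQ$, $\lambda^2>0$, and for $H=\lambda^\perp\in\mathcal H$ write $\sigma_H:=\sigma_\lambda$, which depends only on $H$. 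By Theorem~\ref{MTH1}, $F\circ\sigma_H=-F$ for every $H\in\mathcal H$; in particular each $\sigma_H$ lies in $\Orth^+(M\otimes\QQ)$ and permutes the components of $Z$. Together with $F\circ g=\chi(g)F$ for $g\in\Gamma$, this shows that $\mathcal H$ is invariant under the group $G:=\langle\,\Gamma,\ \sigma_H\ (H\in\mathcal H)\,\rangle<\Orth^+(M\otimes\QQ)$, and that $g\mapsto (F\circ g)/F$ is a well-defined character $\nu$ of $G$ with $\nu|_\Gamma=\chi$ and $\nu(\sigma_H)=-1$; thus $F$ is a modular form for $G$ with character $\nu$, of the same weight, vanishing exactly on $\bigcup\mathcal H$.

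The main step is to show that $G$ acts properly discontinuously on $\cD(M)$. Fix an $\Orth^+(M\otimes\RR)$-invariant metric, so that $\cD(M)$ becomes a complete symmetric space in which closed balls are compact; local finiteness of $\mathcal H$ then says that each closed ball meets only finitely many $H\in\mathcal H$. Since $\mathcal H$ is $\Gamma$-invariant and $\Gamma$ is Zariski dense in $\SO(M\otimes\QQ)$ by the Borel density theorem, $\Gamma$ acts irreducibly on $M\otimes\QQ$, so the normal vectors of the hyperplanes in $\mathcal H$ span $M\otimes\QQ$. Fix a base point $x_0$ and a radius $R$ large enough that the finite set $\mathcal H_{\le R}:=\{H\in\mathcal H:\ d(x_0,H)\le R\}$ already contains hyperplanes whose normals span. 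If $G$ were not properly discontinuous there would be infinitely many distinct $g_k\in G$ with $d(x_0,g_kx_0)\le C$ for a fixed $C$; each $g_k$ maps $\mathcal H_{\le R}$ into the finite set $\mathcal H_{\le R+C}$, and two of the $g_k$ inducing the same map on $\mathcal H_{\le R}$ differ by an orthogonal transformation fixing each of a spanning family of hyperplanes, hence differ by one of finitely many elements. Thus $\{g_k\}$ would be finite, a contradiction; so $G$ is properly discontinuous and therefore discrete in $\Orth^+(M\otimes\RR)$.

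Finally, I would descend to a lattice. Since $\Gamma$ has finite covolume in $\Orth^+(M\otimes\RR)$ (Borel--Harish-Chandra) and $\Gamma\le G$ with $G$ discrete, $[G:\Gamma]<\infty$. Choosing coset representatives $g_1,\dots,g_r$ with $g_1=\mathrm{id}$ and using that $\Gamma$ preserves $M$, the set $\mathbb{M}_0:=\sum_{i=1}^r g_i(M)$ is a $G$-invariant lattice on $M\otimes\QQ$: it is finitely generated because every $g_i$ has bounded denominators, and of full rank because it contains $M$. Replacing the bilinear form by a suitable positive integer multiple of itself — which changes neither $\Orth^+$, nor $\cD(M)$, nor the class of modular forms, nor the reflections $\sigma_\lambda$ — we may arrange that $\mathbb{M}:=\mathbb{M}_0$ is even. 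Then $\Gamma<G\le\Orth^+(\mathbb{M})$; $F$ is a special modular form for $\Gamma$ on $\mathbb{M}$, its zero divisor being the same combination of quadratic divisors $\lambda^\perp$ with $\lambda\in\mathbb{M}\otimes\QQ=M\otimes\QQ$ and $\lambda^2>0$; and $F$ is reflective on $\mathbb{M}$ since $\sigma_\lambda\in G\le\Orth^+(\mathbb{M})$ whenever $\lambda^\perp$ appears in $Z$. This is exactly the statement of the theorem.

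I expect the properly-discontinuous step to be the real obstacle. A priori, adjoining the rational reflections $\sigma_H$ to the arithmetic group $\Gamma$ could yield a Zariski-dense but non-arithmetic, in particular non-discrete, subgroup of $\Orth^+(M\otimes\QQ)$ — this already occurs for groups generated by an arithmetic group together with finitely many rational elements in general. What prevents it here is that $F$ is a genuine holomorphic object, so its zero locus is an honest, nonempty, locally finite divisor; the content of the argument is that a group of isometries of a complete symmetric space that preserves such a hyperplane arrangement with spanning normals must be discrete. By comparison, extending the character, building $\mathbb{M}$, and verifying reflectivity are formal.
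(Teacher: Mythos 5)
Your proof is correct, and its skeleton coincides with the paper's: adjoin to $\Gamma$ the reflections attached by Theorem \ref{MTH1} to the components of $\operatorname{div} F$, show the resulting group $G=\Gamma_F$ is discrete of finite index over $\Gamma$, let $\mathbb{M}$ be a rescaling of the lattice generated by the finitely many translates $g_i(M)$, and check reflectivity formally. The genuine difference is in the step you correctly identify as the crux, discreteness of $\Gamma_F$. The paper gets it in one line: by Theorem \ref{th:divisor} every generator of $\Gamma_F$ multiplies $F$ by a root of unity, so $\Gamma_F$ lies in the exact stabilizer of $F^{2m}$ (with $m$ the order of the character of $F^2$ on $\Gamma$), which is a closed subgroup of $\Orth^+(M\otimes\RR)$ whose identity component must be trivial since $\Orth^+(l,2)$ is simple modulo center and $F$ is non-constant; finite covolume of $\cD(M)/\Gamma_F$ then follows from that of $\cD(M)/\Gamma$, and Margulis arithmeticity gives commensurability with $\Orth^+(M)$. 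You instead argue geometrically: $G$ permutes the locally finite, $\Gamma$-invariant hyperplane arrangement $\mathcal H=\operatorname{supp}\operatorname{div}F$, whose normals span $M\otimes\QQ$ by Borel density and irreducibility of the standard representation, and a group of isometries permuting such an arrangement while moving a base point a bounded distance is finite; finite index over $\Gamma$ then comes from comparing covolumes rather than from arithmeticity. Your route is more elementary and self-contained (no Margulis), at the cost of a longer argument; the paper's route is shorter and additionally yields that $\Gamma_F$ is arithmetic, which is what its Lemma \ref{lem:lattice} quotes. Two small points worth making explicit in your write-up: (i) in the step where two elements inducing the same map on $\mathcal H_{\le R}$ "differ by one of finitely many elements," you should note that an isometry fixing the divisor $\lambda^\perp$ setwise sends $\lambda$ to $\pm\lambda$, so an element fixing a spanning family of such divisors is one of finitely many linear maps; (ii) to match the paper's definition of "reflective" you also need $\Gamma$ to have finite index in $\Orth^+(\mathbb{M})$, which is immediate since $\Gamma$ is a lattice contained in the lattice $\Orth^+(\mathbb{M})$.
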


Theorem \ref{MTH2} reduces the difficult problem of classifying singular automorphic products to the easier problem of classifying reflective modular forms. A full classification of reflective modular forms is not yet available, but many partial results have been obtained over the past two decades \cite{GN98, GN02, Bar03, Sch06, Sch17, Ma17, Ma18, Dit19, Wan18, GN18, Wan19, Wan22, Wan23a, Wan23b}. In particular, the first named author \cite{Wan18, Wan19, Wan23b} classified lattices of large rank which has a reflective modular form. This yields a complete solution to Borcherds' problem (2).

\begin{theorem}\label{MTH3}
There is no special modular form of singular weight on $\Orth(l,2)$ when $l\geq 21$ and $l\neq 26$. The Borcherds form $\Phi_{12}$ is the unique special modular form of singular weight on $\Orth(26,2)$ up to a constant factor.     
\end{theorem}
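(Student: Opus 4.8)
The plan is to deduce Theorem \ref{MTH3} from Theorem \ref{MTH2} together with the classification of reflective modular forms on lattices of large rank. Suppose $F$ is a special modular form of singular weight $l/2-1$ for a finite-index subgroup $\Gamma$ of $\Orth^+(M)$, where $M$ has signature $(l,2)$ and $l\geq 21$. By Theorem \ref{MTH2} there is an even lattice $\MM$ on $M\otimes\QQ$ with $\Gamma<\Orth^+(\MM)$ such that $F$, viewed as a modular form on $\MM$, is reflective. Thus $\MM$ is an even lattice of signature $(l,2)$, $l\geq 21$, carrying a reflective modular form of weight exactly the singular weight $l/2-1$. The entire question is thereby reduced to a finite one: which even lattices of signature $(l,2)$ with $l\geq 21$ admit a reflective modular form, and does any such form have singular weight?

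First I would invoke the classification of reflective modular forms in large rank obtained by the first named author in \cite{Wan18, Wan19, Wan23b}. For $l\geq 27$ no even lattice of signature $(l,2)$ admits a reflective modular form, so no such $F$ exists in that range. For $21\leq l\leq 26$ the lattice $\MM$ must belong to an explicit finite list, and on each member the reflective modular forms---together with their weights and characters---are completely known. The next step is then bookkeeping: running through the list one finds that the only reflective modular form whose weight coincides with the singular weight $l/2-1$ of its lattice is the Borcherds form $\Phi_{12}$ on $\II_{26,2}$, of weight $12=26/2-1$. In particular every reflective modular form occurring for $21\leq l\leq 25$ has weight strictly larger than $l/2-1$, so singular weight is impossible there as well.

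It remains to examine $l=26$ precisely. Here the classification forces $\MM\cong\II_{26,2}$, and, since it in fact determines the reflective modular forms themselves, it identifies $F$ up to a constant with $\Phi_{12}$. One can also argue more directly: $F$ must vanish somewhere, for otherwise $F^{-1}$ would be a nonzero holomorphic modular form of weight $1-l/2<0$, contradicting that a nonzero holomorphic orthogonal modular form has nonnegative weight; reflectivity on the unimodular lattice $\II_{26,2}$ then confines the zero divisor of $F$ to norm-$2$ divisors, and once one knows this divisor is the full norm-$2$ locus---the divisor of $\Phi_{12}$---the quotient $F/\Phi_{12}$ is a holomorphic modular form of weight $0$, hence constant by the Koecher principle. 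The main obstacle lies entirely in the input: the large-rank classification of reflective modular forms from \cite{Wan18, Wan19, Wan23b}. Granting that, the only genuinely new work is to match the singular-weight condition $k=l/2-1$ against the finitely many reflective forms on the finitely many lattices of rank $\geq 21$, and, in the boundary case $l=26$, to confirm that the passage to $\Phi_{12}$ is not spoiled by working with a proper finite-index subgroup of $\Orth^+(\II_{26,2})$.
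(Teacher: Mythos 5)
Your overall strategy---reduce to a reflective modular form via Theorem \ref{MTH2} and then invoke the large-rank classification of reflective forms---is the same as the paper's, and it does dispose of $l=21$, $l=23,24,25$ and $l\geq 27$, since Theorem 1.2 of \cite{Wan23b} rules out \emph{any} reflective modular form on a lattice of those signatures. The genuine gap is the case $l=22$, which you dismiss as ``bookkeeping.'' The classification does \emph{not} exclude $l=22$: the lattice $2U\oplus D_{20}$ carries a reflective modular form $\Psi_{24}$ of weight $24$, and the uniqueness statement (``every reflective form is a power of $\Psi_{24}$'') holds only for the \emph{full} group $\Orth^+(2U\oplus D_{20})$. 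Your $F$ is a reflective form of weight $10$ for a finite-index subgroup $\Gamma_F$, and nothing in the classification directly forbids such a form from existing on a subgroup with weight smaller than $24$. The paper closes this by first using \cite[Corollary 3.2]{Ma18} to place $\Gamma_F$ inside $\Orth^+(\mathbb{M}_2)$ with $\mathbb{M}_2$ a rescaled $2U\oplus L$, then symmetrizing: $\hat F=\prod_{g}F|g$ over the $a$ cosets is reflective of weight $10a$ for the full group, hence $\hat F=\Psi_{24}^d$, giving $10a=24d$. Weight comparison alone does not yield a contradiction (e.g.\ $a=12$, $d=5$); one must also use that $F$ has only \emph{simple} zeros (Theorem \ref{MTH1}), so $\hat F$ vanishes to order at most $a$ on any divisor, while $\Psi_{24}^d$ vanishes to order $8d$ on the divisors $u^\perp$ with $u^2=1$. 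This forces $8d\leq a$, i.e.\ $\tfrac{10}{3}a\leq a$, a contradiction. Without this multiplicity argument your proof does not eliminate $l=22$.

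A smaller issue: in the $l=26$ case the same subgroup-versus-full-group point arises, and your sketch assumes the divisor of $F$ is the full norm-$2$ locus before forming $F/\Phi_{12}$. The clean route (and the paper's) is the reverse quotient: since $F$ has only simple zeros supported on norm-$2$ divisors, $\Phi_{12}/F$ is holomorphic of weight $0$ for $\Gamma_F$, hence constant; this simultaneously shows the divisor is the full norm-$2$ locus and identifies $F$ with $\Phi_{12}$ up to a constant. Here too the simple-zero statement of Theorem \ref{MTH1} is doing essential work that your write-up omits.
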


Special modular forms of singular weight on $\Orth(l,2)$ do exist for $l=3$, $4$, $6$, $8$, $10$, $12$, $14$, $18$. They do not appear to exist on $\Orth(l,2)$ for any other $l\leq 20$. 

The finiteness of lattices with reflective modular forms is known in certain specific cases \cite{Ma17, Ma18, Wan23b}. However, the finiteness of reflective modular forms is unknown in general. Therefore, we cannot give a full solution to Borcherds' problem (1). In 2018 Ma \cite[Corollary 1.10]{Ma18} proved that up to scaling there are only finitely many lattices of signature $(l,2)$ with $l\geq 4$ which has a reflective modular form with simple zeros.  Theorem \ref{MTH1}, Theorem \ref{MTH2} and Ma's result together yield the following weak solution to Borcherds' problem (1).

\begin{corollary}
The set of lattices $\mathbb{M}$ in Theorem \ref{MTH2} is finite up to scaling when $l\geq 4$.   
\end{corollary}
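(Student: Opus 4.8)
The plan is to deduce the statement by assembling three ingredients that are already available: Theorem~\ref{MTH1}, Theorem~\ref{MTH2}, and Ma's finiteness theorem \cite[Corollary 1.10]{Ma18}, which asserts that up to scaling there are only finitely many lattices of signature $(l,2)$ with $l\geq 4$ that carry a reflective modular form with simple zeros. The only real content will be to check that the lattices produced by Theorem~\ref{MTH2} genuinely satisfy Ma's hypotheses.

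I would start from an arbitrary lattice $\mathbb{M}$ occurring in Theorem~\ref{MTH2}: by definition there are an even lattice $M$ of signature $(l,2)$ with $l\geq 4$, a finite-index subgroup $\Gamma<\Orth^+(M)$, and a special modular form $F$ of singular weight for $\Gamma$ such that $\mathbb{M}$ lies on $M\otimes\QQ$, $\Gamma<\Orth^+(\mathbb{M})$, and $F$ is reflective on $\mathbb{M}$. First I would observe that $F$ really is a non-constant modular form on $\mathbb{M}$: since $\mathbb{M}\otimes\QQ=M\otimes\QQ$, the cone $\cA(\mathbb{M})$, the domain $\cD(\mathbb{M})$, and the automorphy factor coincide with those of $M$, so $F$ is literally the same holomorphic function, of the same weight $l/2-1\geq 1$, with the same transformation law under $\Gamma$; non-constancy is built into the definition of ``special''. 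Next, since $\Orth^+(\mathbb{M})$ and $\Orth^+(M)$ are commensurable (they are arithmetic subgroups of the orthogonal group of the common rational quadratic space $M\otimes\QQ$), and $\Gamma$ is already contained in $\Orth^+(\mathbb{M})$ and of finite index in $\Orth^+(M)$, it has finite index in $\Orth^+(\mathbb{M})$ as well; thus $F$ is a reflective modular form for a finite-index subgroup of $\Orth^+(\mathbb{M})$. Finally I would invoke Theorem~\ref{MTH1}: $F$ has only simple zeros, and this is a property of the zero divisor of $F$ on $\cD(M)=\cD(\mathbb{M})$ that does not depend on the lattice. Hence $\mathbb{M}$ is a lattice of signature $(l,2)$ with $l\geq 4$ admitting a reflective modular form with simple zeros, so \cite[Corollary 1.10]{Ma18} places it in a family that is finite up to scaling. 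Since this holds for every $\mathbb{M}$ arising in Theorem~\ref{MTH2}, the set of all such $\mathbb{M}$ is finite up to scaling. (The statement is meaningful because rescaling a lattice does not change its orthogonal group, which is also why Ma's result is phrased up to scaling.)

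I do not expect a genuine obstacle: the substantive work is entirely contained in Theorems~\ref{MTH1} and \ref{MTH2} and in Ma's theorem, all of which may be used. The points needing care are bookkeeping: confirming that the notion of ``reflective modular form with simple zeros'' produced by Theorems~\ref{MTH1}--\ref{MTH2} matches the hypotheses of \cite[Corollary 1.10]{Ma18}, in particular that allowing $F$ to be modular only for a finite-index subgroup of $\Orth^+(\mathbb{M})$ rather than for the full group is harmless there; and writing out the commensurability step that upgrades $\Gamma<\Orth^+(\mathbb{M})$ to finite index. The hypothesis $l\geq 4$ is exactly the one required for Ma's theorem---for $l=3$ the corresponding finiteness is unavailable (and in fact false)---which is why it is excluded.
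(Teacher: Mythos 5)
Your proposal is correct and matches the paper's argument, which simply combines Theorem \ref{MTH1} (simple zeros), Theorem \ref{MTH2} (reflectivity on $\mathbb{M}$), and Ma's finiteness theorem \cite[Corollary 1.10]{Ma18}; the paper states this combination in one sentence without spelling out the bookkeeping. Your additional checks (that $F$ is literally the same function on $\cD(\mathbb{M})=\cD(M)$ and that $\Gamma$ has finite index in $\Orth^+(\mathbb{M})$ by commensurability) are exactly the details the paper leaves implicit.
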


This paper is organized as follows. In Section \ref{sec:th1} we introduce the Laplace operator and prove Theorem \ref{MTH1}. In Section \ref{sec:th2} we prove Theorem \ref{MTH2} and give a corollary. Section \ref{sec:th3} is devoted to the proof of Theorem \ref{MTH3}.

\section{Proof of Theorem \ref{MTH1}}\label{sec:th1}
In this section we use the Laplace operator on a tube domain to prove Theorem \ref{MTH1}. 

Let $M$ be an even lattice of signature $(l,2)$ with $l\geq 3$. Let $c\in M$ be a primitive norm-zero vector and $c'\in M'$ with $(c,c')=1$. 
Let $e_1,...,e_{l}$ be any $\RR$-basis of the signature $(l-1,1)$ lattice
$$
M_{c,c'}=\{ x \in M : (x,c)=(x,c')=0 \}
$$ 
and define the Gram matrix $S = (s_{ij})_{i, j=1}^{l}$ with inverse $S^{-1} = (s^{ij})_{i, j}$, where $s_{ij} := (e_i, e_j)$.
The \textit{holomorphic Laplace operator} is defined as
$$
\mathbf{\Delta} = \mathbf{\Delta}_{c, c'} = \frac{1}{2} \sum_{i,j=1}^{l} s^{ij} \frac{\partial^2}{\partial e_i \partial e_j}.
$$
This operator acts on functions defined on the tube domain $\mathbb{H}_{c, c'}$ which is a connected component of the space
$$
\{ Z = X + iY :\; X, Y \in M_{c,c'}\otimes\RR, \; (Y,Y)<0  \}. 
$$
It is clear that $\mathbf{\Delta}_{c, c'}$ is independent of the basis $e_i$.

For any $\lambda \in M_{c,c'}\otimes\QQ$, we have 
$$
\mathbf{\Delta} e^{2\pi i (\lambda, Z)} = -2\pi^2 (\lambda,\lambda) e^{2\pi i (\lambda, Z)}.
$$ 
Therefore, a function $F$ that is represented near $c$ by a Fourier series $$F(Z) = \sum_{\lambda\in M_{c,c'}\otimes\QQ} c(\lambda) e^{2\pi i (\lambda, Z)}$$ satisfies $\mathbf{\Delta} F = 0$ if and only if it is \emph{singular at $c$}, that is, $c(\lambda) = 0$ for every $\lambda$ of nonzero norm.

For any meromorphic function $F : \mathbb{H}_{c, c'} \rightarrow \mathbb{C}$, matrix elements $g \in \mathrm{O}^+(M \otimes \mathbb{R})$ and positive integer $m $, we have (see e.g. \cite[Lemma 2.4]{Wil21})
$$
(\mathbf{\Delta}^m F) \Big|_{l/2 + m} g = \mathbf{\Delta}^m \Big( F \Big|_{l/2 - m} g \Big).$$ 
In particular, if $F$ transforms like a modular form of weight $l/2-m$ then $\mathbf{\Delta}^m F$ transforms like a modular form of weight $l/2 + m$. If $F$ is in fact a non-constant (holomorphic) modular form, then $m$ must be 1, i.e. $F$ is of singular weight, and thus $\mathbf{\Delta} F = 0$.

Theorem \ref{MTH1} is a particular case of the following result.

\begin{theorem}\label{th:divisor}
Let $F$ be a modular form of singular weight on $\Gamma<\Orth^+(M)$. 
Let $v$ be a positive-norm vector of $M$. If $F$ vanishes on the quadratic divisor $v^\perp$, then $v^\perp$ has multiplicity one in the divisor of $F$ and $F$ is anti-invariant under the associated reflection, i.e.
$$
F(\sigma_v(\mathcal{Z})) = -F(\mathcal{Z}). 
$$
\end{theorem}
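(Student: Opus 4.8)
The plan is to work near a $0$-dimensional cusp $c$ and exploit the fact that $\mathbf{\Delta}F = 0$ together with the modular behaviour of $F$ under the reflection $\sigma_v$. After a change of cusp I may assume $v \in M_{c,c'}\otimes\QQ$, so that the rational quadratic divisor $v^\perp$ appears on the tube domain $\HH_{c,c'}$ as the hyperplane $(v, Z) = 0$. Write the Fourier expansion $F(Z) = \sum_\lambda c(\lambda) e^{2\pi i(\lambda, Z)}$; singularity at $c$ means $c(\lambda) = 0$ unless $(\lambda,\lambda) = 0$. I first want to understand the order of vanishing of $F$ along $(v,Z)=0$. Suppose $F$ vanishes to order $n \ge 1$ there. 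The idea is that $\mathbf{\Delta}$ interacts nicely with the linear form $\ell(Z) := (v,Z)$: since $\mathbf{\Delta}\ell = 0$ and $\mathbf{\Delta}$ is a second-order operator with constant coefficients in the $e_i$-coordinates, one computes $\mathbf{\Delta}(\ell^n g) = \ell^n \mathbf{\Delta}g + 2n\,\ell^{n-1}\,\langle \nabla \ell, \nabla g\rangle + n(n-1)\,\ell^{n-2}(v,v)\,g$, where the pairing of gradients is taken with respect to $S^{-1}$. Restricting the identity $\mathbf{\Delta}F = 0$ to the divisor $\ell = 0$ and using $F = \ell^n g$ with $g$ not identically zero on that divisor, the first two terms vanish on $\ell = 0$ when $n \ge 2$, forcing $n(n-1)(v,v)\,g|_{\ell=0} = 0$; since $(v,v) > 0$ this gives $n = 1$. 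This handles the simplicity of the zero.

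For the anti-invariance, consider $G(Z) := F(Z) + F(\sigma_v(Z))$, which is again a modular form of singular weight for the group $\Gamma \cap \sigma_v\Gamma\sigma_v$ (finite index), hence satisfies $\mathbf{\Delta}G = 0$, and $G$ is invariant under $\sigma_v$ by construction. The reflection $\sigma_v$ acts on the tube domain as an isometry fixing the hyperplane $\ell = 0$ pointwise and acting as $-1$ on the normal direction. Since $F$ vanishes on $\ell = 0$ and $\sigma_v$ fixes that locus pointwise, $F \circ \sigma_v$ also vanishes there, so $G$ vanishes on $\ell = 0$ as well; by the first part $G$ vanishes to order exactly one — unless $G \equiv 0$, which is precisely the claim. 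So suppose $G \not\equiv 0$ and write $G = \ell \cdot h$ with $h|_{\ell=0} \not\equiv 0$. But $G$ is $\sigma_v$-invariant while $\ell$ is $\sigma_v$-anti-invariant ($\ell \circ \sigma_v = -\ell$), so $h$ must be $\sigma_v$-anti-invariant, hence $h$ vanishes on $\ell = 0$, contradicting $h|_{\ell=0}\not\equiv 0$. Therefore $G \equiv 0$, i.e. $F(\sigma_v(\mathcal{Z})) = -F(\mathcal{Z})$.

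The main obstacle I anticipate is making the local analysis near the divisor fully rigorous: one must ensure that "vanishing to order $n$ along $v^\perp$" can be expressed, after choosing appropriate holomorphic coordinates on $\HH_{c,c'}$ in which $\ell(Z) = (v,Z)$ is one coordinate, as $F = \ell^n g$ with $g$ holomorphic and $g|_{\ell = 0}\not\equiv 0$, and that the Laplace identity above is valid in these coordinates — this is where the constant-coefficient nature of $\mathbf{\Delta}$ in the $e_i$-coordinates and the linearity of $\ell$ are essential, so that $\mathbf{\Delta}$ does not produce lower-order cross terms beyond the ones written. A secondary point is the reduction to the case $v \in M_{c,c'}\otimes\QQ$: one needs a norm-zero vector $c$ orthogonal to $v$, which exists because $v^\perp \cap M$ has signature $(l-1,2)$ and hence (for $l - 1 \ge 2$) contains norm-zero vectors; since $l \ge 3$ this is fine. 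Once these technical points are in place the argument is essentially the two displayed computations above.
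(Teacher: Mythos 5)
Your argument is in substance the same as the paper's: pass to a tube domain $\HH_{c,c'}$ with $v\in M_{c,c'}$, use $\mathbf{\Delta}F=0$ (forced by singular weight), and read off from the leading term of $\mathbf{\Delta}(\ell^n g)$ that $n(n-1)(v,v)\,g|_{\ell=0}=0$, hence $n=1$; then apply the same multiplicity bound to $G=F+F\circ\sigma_v$, which vanishes to order $\ge 2$ along $\ell=0$. The paper phrases the computation via the Taylor expansion $F=f_d(z_2)z_1^d+O(z_1^{d+1})$ in the coordinate $Z=z_1v+z_2$ and gets $G=O(z_1^2)$ by direct cancellation of the linear terms, whereas you get the order $\ge 2$ vanishing of $G$ from the parity argument ($G$ is $\sigma_v$-invariant while $\ell$ is anti-invariant); these are equivalent. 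Two small repairs are needed. First, in the multiplicity computation your phrase ``the first two terms vanish on $\ell=0$'' does not by itself isolate the third term when $n\ge 3$ (it also carries a factor $\ell^{n-2}$); you should first divide the identity $\mathbf{\Delta}F\equiv 0$ by $\ell^{n-2}$ and then restrict to $\ell=0$, exactly as the paper does by comparing coefficients of $z_1^{d-2}$. Second, and more seriously, your justification for the existence of an isotropic $c$ orthogonal to $v$ is wrong at the edge case $l=3$: there $v^\perp\cap M$ has signature $(2,2)$, and indefinite quaternary lattices over $\QQ$ can be anisotropic (Meyer's theorem requires rank $\ge 5$) --- e.g.\ the norm form of an indefinite rational quaternion division algebra. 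The paper closes this gap differently: the Fourier expansion of the non-constant singular-weight form $F$ at any $0$-dimensional cusp $c_0$ is supported on isotropic vectors of $M_{c_0,c_0'}$, so $M\otimes\QQ$ contains an isotropic \emph{plane}, which necessarily meets $v^\perp$ in an isotropic line. With that substitution your proof goes through.
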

\begin{proof}
First, note that the existence of a singular-weight modular form $F$ implies that $M$ contains an isotropic plane, i.e. $\cD(M)$ contains a one-dimensional cusp. (Indeed, $M$ has rank at least $5$ and therefore contains a primitive isotropic vector $c$. The Fourier expansion of $F$ about $c$ is supported only on norm-zero vectors orthogonal to $c$, and as $F$ is non-constant, such vectors must exist.)

Fix a primitive isotropic vector $c\in M$ and a vector $c'\in M'$ with $(c,c')=1$ such that $v\in M_{c,c'}$. We view $F$ as a modular form on the associated tube domain $\HH_{c,c'}$. Let $K$ be the orthogonal complement of $v$ in $M_{c,c'}$. We decompose $Z\in M_{c,c'}\otimes\CC$ as $Z=z_1v+z_2$ for $z_1\in \CC $ and $z_2\in K\otimes\CC$. 

If $d$ is the multiplicity of $v^\perp$ in the divisor of $F$, then the Taylor series of $F$ at $z_1=0$ has the form 
$$
F(Z)=f_d(z_2)z_1^d + O(z_1^{d+1}), \quad f_d(z_2)\neq 0. 
$$
By applying the Laplace operator on the tube domain $\HH_{c,c'}$ to $F$, we have
$$
0=\mathbf{\Delta}_{c,c'}(F)=\frac{d(d-1)}{2}f_d(z_2)z_1^{d-2}+O(z_1^{d-1}),
$$
which shows that $d=1$. Similarly, applying the Laplace operator to the function
$$
G(Z):=F(Z)+F|\sigma_v(Z) = F(z_1v+z_2)+F(-z_1v+z_2) = O(z_1^2)
$$
we find that $\mathbf{\Delta}(G)=0$, which forces $F(Z)+F|\sigma_v(Z)=0$.
\end{proof}

\section{Proof of Theorem \ref{MTH2}}\label{sec:th2}
We divide the proof of Theorem \ref{MTH2} into two lemmas. The notation is the same as Theorem \ref{MTH2}. 

\begin{lemma}\label{lem:arithmetic}
Let $\Gamma_F$ be the group generated by $\Gamma$ and the reflections $\sigma_r$ for which $F$ vanishes on $r^\perp$. Then $\Gamma_F$ is arithmetic in $\Orth^+(M\otimes\QQ)$. In particular, $\Gamma_F\cap \Orth^+(M)$ has finite index in $\Gamma_F$. 
\end{lemma}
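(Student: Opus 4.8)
I would reduce the lemma to a single assertion: \emph{$\Gamma_F$ is discrete in $\Orth^+(M\otimes\RR)$}. Granting this, everything else is soft. Indeed $\Gamma$ has finite index in $\Orth^+(M)$, which by Borel--Harish-Chandra is a lattice in $\Orth^+(M\otimes\RR)$, so $\Gamma$ is itself a lattice there; a discrete group $\Gamma_F$ containing $\Gamma$ then admits a fundamental domain contained in one for $\Gamma$, hence has finite covolume, and comparing covolumes gives $[\Gamma_F:\Gamma]<\infty$. Consequently $\Gamma_F$ is commensurable with $\Orth^+(M)$ and hence arithmetic in $\Orth^+(M\otimes\QQ)$, and $\Gamma_F\cap\Orth^+(M)\supseteq\Gamma$ has finite index in $\Gamma_F$, which is the ``in particular''.

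\textbf{Setting up the geometry.} First I would observe that $\Gamma_F$ preserves the set $\mathcal{S}$ of irreducible components of $\mathrm{div}(F)$. This is because $F$ is modular for $\Gamma$ with a character, so $F\circ\gamma$ has the same divisor as $F$ for $\gamma\in\Gamma$; and for each reflection $\sigma_r$ in the generating set, Theorem~\ref{MTH1} gives $F\circ\sigma_r=-F$, so again $F\circ\sigma_r$ has the same divisor. Thus every generator of $\Gamma_F$ permutes $\mathcal{S}$, hence so does $\Gamma_F$. Each element of $\mathcal{S}$ is a rational quadratic divisor $\lambda^\perp$, and $\mathcal{S}$ is locally finite in $\cD(M)$ since it is the support of the divisor of a nonzero holomorphic function. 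If $\mathcal{S}=\varnothing$ then $\Gamma_F=\Gamma$ and we are done, so assume $\mathcal{S}\neq\varnothing$. I would then note that the $\QQ$-span of $\{r:\ r^\perp\in\mathcal{S}\}$ is a nonzero $\Gamma$-invariant subspace of $M\otimes\QQ$; since $\Gamma$ is Zariski dense in $\mathrm{SO}(M\otimes\RR)$ by Borel's density theorem and the standard representation of $\mathrm{SO}(l,2)$ on $\RR^{l+2}$ is irreducible for $l\geq3$, this span is all of $M\otimes\QQ$. Fix finitely many components $r_1^\perp,\dots,r_N^\perp\in\mathcal{S}$ whose normals span $M\otimes\RR$.

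\textbf{Discreteness.} Now suppose $g_n\in\Gamma_F$ with $g_n\to\mathrm{id}$. For a fixed $j$, the hyperplanes $g_n(r_j^\perp)$ all lie in $\mathcal{S}$ and converge to $r_j^\perp$. Choosing a point $p\in r_j^\perp\cap\cD(M)$ and a compact neighbourhood $K$ of $p$ meeting only finitely many members of $\mathcal{S}$, we have $g_n(p)\in K$ for $n$ large, and $g_n(p)\in g_n(r_j^\perp)\in\mathcal{S}$, so $g_n(r_j^\perp)$ takes finitely many values and therefore equals $r_j^\perp$ for $n$ large. Applying this to $j=1,\dots,N$ simultaneously, there is $n_0$ with $g_n(r_j^\perp)=r_j^\perp$ for all $j$ and all $n\geq n_0$. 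An element of $\Orth^+(M\otimes\RR)$ fixing the hyperplane $r_j^\perp$ setwise also fixes the orthogonal line $\RR r_j$ setwise, so $g_n r_j=\pm r_j$, and since $g_n\to\mathrm{id}$ the sign is $+$ for $n$ large; as the $r_j$ span $M\otimes\RR$ this forces $g_n=\mathrm{id}$ for $n\gg0$. Hence $\Gamma_F$ is discrete, and the lemma follows by the reduction above.

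\textbf{Main obstacle.} The delicate point is exactly the discreteness step: one must be sure that local finiteness of $\mathrm{div}(F)$ genuinely pins down the images $g_n(r_j^\perp)$, and one needs that the normal vectors of the divisor components span $M\otimes\RR$ rather than a proper subspace. The latter is where the only non-elementary input enters, namely the irreducibility of the standard representation of the orthogonal group together with Borel density applied to the arithmetic group $\Gamma$; everything downstream of discreteness (the covolume comparison, commensurability, arithmeticity) is routine.
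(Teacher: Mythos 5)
Your proof is correct, but it reaches discreteness by a genuinely different route than the paper. The paper's argument is algebraic: letting $m$ be the order of the character of $F^2$ on $\Gamma$ (finite because $\Gamma$ has finite abelianization), Theorem \ref{th:divisor} shows $F^{2m}$ is invariant under every generator of $\Gamma_F$, so $\Gamma_F$ sits inside the stabilizer of the non-constant holomorphic form $F^{2m}$, which is discrete; finite covolume then follows as in your reduction, and arithmeticity is deduced from the Margulis arithmeticity theorem. You instead only use that each generator preserves $\operatorname{div}(F)$ (for which the anti-invariance of Theorem \ref{MTH1} again suffices), and extract discreteness geometrically from the local finiteness of the divisor together with Borel density, which guarantees that the normal directions of the components span $M\otimes\RR$ --- without that spanning step the argument would indeed fail, so it is good that you flagged it as the essential input. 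Your endgame is also lighter: once $[\Gamma_F:\Gamma]<\infty$ is known, arithmeticity follows directly from commensurability of $\Gamma_F$ with $\Orth^+(M)$ inside $\Orth^+(M\otimes\QQ)$, with no appeal to Margulis; this also delivers the ``in particular'' clause (and the finiteness used later in Lemma \ref{lem:lattice}) immediately. The trade-off is that the paper's discreteness step is a one-line reduction to a standard fact about stabilizers of non-constant automorphic forms (whose proof, via the identity component of the stabilizer being normalized by the Zariski-dense $\Gamma$, is of comparable depth to your Borel-density input), whereas yours is longer but entirely self-contained and avoids both the finite-order-of-character issue and the arithmeticity theorem.
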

\begin{proof}
Let $m$ be the order of the character of $F^2$ on $\Gamma$. Then $F^{2m}|g=F^{2m}$ for all $g\in \Gamma_F$ by Theorem \ref{th:divisor}. It follows that $\Gamma_F$ is a discrete subgroup of $\Orth^+(M\otimes\RR)$. Since $\cD(M) / \Gamma$ has finite volume, the smaller quotient space $\cD(M)/\Gamma_F$ also has finite volume. Therefore, $\Gamma_F$ is a lattice in $\Orth^+(M\otimes\QQ)$ and thus an arithmetic subgroup by the Margulis arithmeticity theorem \cite{Mar91}. 
\end{proof}

\begin{lemma}\label{lem:lattice}
The lattice $\mathbb{M}$ in Theorem \ref{MTH2} can be obtained by rescaling the rational lattice generated by $g(M)$, where $g$ runs over representatives of $\Gamma_F / (\Gamma_F\cap \Orth^+(M))$. 
\end{lemma}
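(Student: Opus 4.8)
The plan is to show that the rescaled lattice $\mathbb{M}$ defined in the statement is an \emph{integral} lattice containing $M$ with finite index, is preserved by $\Gamma_F\cap\Orth^+(M)$, and — after passing to the reflections as well — is preserved by all of $\Gamma_F$, so that $F$ becomes a reflective form on $\mathbb{M}$. First I would let $g_1,\dots,g_n$ be coset representatives for $\Gamma_F/(\Gamma_F\cap\Orth^+(M))$, which is a finite set by Lemma \ref{lem:arithmetic}, and set $N = \sum_{i=1}^n g_i(M)$, the $\ZZ$-module they generate inside $M\otimes\QQ$. Since each $g_i\in\Orth^+(M\otimes\QQ)$, the form $(-,-)$ restricted to $N$ is $\QQ$-valued, and because $N$ is finitely generated and contains $M$ (take $g_i = \mathrm{id}$), it is a full-rank $\ZZ$-lattice commensurable with $M$; thus there is a positive integer $t$ so that $\mathbb{M} := N(t)$ (rescaling the form by $t$) is even and integral. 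The key point to verify is invariance: for any $g\in\Gamma_F\cap\Orth^+(M)$ one has $g\Gamma_F = \Gamma_F$, so $g$ permutes the cosets $g_i(\Gamma_F\cap\Orth^+(M))$, whence $\{g g_i(M)\}_i$ differs from $\{g_i(M)\}_i$ only by elements of $\Gamma_F\cap\Orth^+(M)$ — but these fix $M$ setwise, so $g(N) = N$ and $g(\mathbb{M}) = \mathbb{M}$.

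Next I would incorporate the reflections. By Theorem \ref{th:divisor} each $\sigma_r$ with $r^\perp$ in the divisor of $F$ lies in $\Gamma_F$; I must check $\sigma_r\in\Orth^+(\mathbb{M})$. The same coset-permutation argument as above applies verbatim with $g$ replaced by $\sigma_r$: left multiplication by $\sigma_r$ permutes the cosets of $\Gamma_F\cap\Orth^+(M)$ in $\Gamma_F$, so $\sigma_r(N) = N$ and hence $\sigma_r\in\Orth(\mathbb{M})$; that $\sigma_r$ preserves the chosen component $\cD(\mathbb{M}) = \cD(M)$ is automatic since it already did so as an element of $\Gamma_F\subset\Orth^+(M\otimes\RR)$. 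Therefore $\Gamma$ together with all such $\sigma_r$ — that is, $\Gamma_F$ — is contained in $\Orth^+(\mathbb{M})$, so in particular $\Gamma<\Orth^+(\mathbb{M})$.

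Finally I would argue that $F$ is a reflective modular form on $\mathbb{M}$. Modularity for $\Gamma$ on $\mathbb{M}$ is immediate because $\cA(\mathbb{M}) = \cA(M)$ and $\Gamma$ acts identically; and $F$ is still \emph{special}, since its zero divisor is a sum of $\lambda^\perp$ for $\lambda\in M\otimes\QQ = \mathbb{M}\otimes\QQ$ with $\lambda^2>0$. Reflectivity is exactly the statement that $\sigma_\lambda\in\Orth^+(\mathbb{M})$ for every $\lambda$ with $\lambda^\perp$ in the divisor of $F$, which we just established (after clearing denominators, $\sigma_\lambda = \sigma_r$ for a primitive $r\in M$ along the same ray). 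The main obstacle I anticipate is a bookkeeping subtlety rather than a conceptual one: one must be careful that rescaling the bilinear form does not disturb the identification of symmetric domains or the automorphy factor, and that the primitive vector $r$ used to define $\sigma_r$ is taken inside $M$ (not $\mathbb{M}$) so that Theorem \ref{th:divisor} applies — but since reflections depend only on the line $\QQ\lambda$ and not on the scaling of the form, $\sigma_r$ is the same transformation whether viewed on $M$ or $\mathbb{M}$, so this causes no real difficulty.
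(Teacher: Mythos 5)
Your proof is correct and takes essentially the same route as the paper's: form the rational lattice generated by the finitely many $g_i(M)$ (finiteness coming from Lemma \ref{lem:arithmetic}), observe that $\Gamma_F$ preserves it, and rescale to make it even integral, noting that rescaling does not change $\Orth^+$. The only difference is that you spell out the coset-permutation argument for $\Gamma_F$-invariance, which the paper compresses into ``by construction, $\Gamma_F$ fixes $M_F$.''
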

\begin{proof}
Let $M_F$ be the lattice generated by $g(M)$ over $\ZZ$. Since $g\in \Orth^+(M\otimes\QQ)$, the lattice $M_F$ is rational. By construction, $\Gamma_F$ fixes $M_F$. Lemma \ref{lem:arithmetic} shows that the set of $g$ is finite.  Therefore, there exists $d \in \mathbb{N}$ such that the rescaled lattice $M_F(d)$ is integral. Since $\Orth^+(M_F)=\Orth^+(M_F(d))$, we can conclude that $M_F(d)$ is the desired $\mathbb{M}$.
\end{proof}

For lattices that split a unimodular plane, 
Theorem \ref{MTH2} sharply restricts the principal parts of vector-valued modular forms that lift to holomorphic Borcherds products
of singular weight:

\begin{corollary}
Let $F$ be a holomorphic Borcherds product of singular weight on a lattice of type $M=U\oplus K$. We denote by $\mathrm{m}_K$ the positive generator of the integral ideal generated by $v^2/2$ for $v\in K$. 
We write the principal part of the input of $F$ as
$$
\sum_{n>0} \sum_{x\in K'/K} c(x,-n)q^{-n} \mathbf{e}_x.  
$$
If $c(x,-n)\neq 0$, then $\mathrm{m}_K / n \in \ZZ$. 
\end{corollary}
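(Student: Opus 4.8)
The plan is to reduce the statement to a divisibility of lattice norms and then exploit Theorem~\ref{th:divisor} together with discreteness of the group $\Gamma_F$. Since $\mathrm{m}_K=\gcd\{v^2/2:v\in K\}$, the conclusion $\mathrm{m}_K/n\in\ZZ$ is equivalent to asking that $v^2/(2n)\in\ZZ$ for every $v\in K$, i.e.\ that $n^{-1}(\,\cdot\,,\cdot\,)|_K$ be an even integral form. Because $U$ is unimodular, $M'=U\oplus K'$; fix generators $e_0,f_0$ of the $U$-summand with $(e_0,f_0)=1$. As $F$ is holomorphic, all Fourier coefficients of its input of negative index are non-negative (Borcherds), so $c(x,-n)>0$. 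For any representative $\mu\in K'$ of $x$ put $\lambda_\mu:=e_0+(n-\tfrac12\mu^2)f_0+\mu\in M'$; this vector is primitive in $M'$, lies in the coset $x$ modulo $M$, has $\lambda_\mu^2=2n$, and the multiplicity of $\lambda_\mu^\perp$ in $\mathrm{div}(F)$ is $\sum_{k\ge1}c(kx,-k^2n)$, a sum of non-negative terms whose first term is positive. Hence $\lambda_\mu^\perp\subseteq\mathrm{div}(F)$ for every such $\mu$.

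Since $F$ has singular weight, Theorem~\ref{th:divisor} applies to each $\lambda_\mu$: the component $\lambda_\mu^\perp$ is reduced and $F\circ\sigma_{\lambda_\mu}=-F$. Thus every product $\sigma_{\lambda_\mu}\sigma_{\lambda_{\mu'}}$ fixes $F$, so all the $\sigma_{\lambda_\mu}$ lie in the group $\Gamma_F$ of Lemma~\ref{lem:arithmetic}, which is arithmetic, hence discrete in $\Orth^+(M\otimes\RR)$. A direct computation gives
$$
\lambda_\mu^2=\lambda_{\mu'}^2=2n,\qquad (\lambda_\mu,\lambda_{\mu'})=2n-\tfrac12(\mu-\mu')^2,
$$
so for $v:=\mu-\mu'\in K$ with $0<v^2<8n$ the plane $\langle\lambda_\mu,\lambda_{\mu'}\rangle$ is positive definite, the product $\sigma_{\lambda_\mu}\sigma_{\lambda_{\mu'}}$ is a non-trivial isometry fixing the sub-domain orthogonal to that plane, hence has finite order in the discrete group $\Gamma_F$. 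Its rotation angle $2\varphi$ therefore satisfies $\cos\varphi=1-v^2/(4n)\in\QQ$, and Niven's theorem forces $\cos\varphi\in\{0,\pm\tfrac12,\pm1\}$, i.e.\ $v^2/(2n)\in\{0,1,2,3,4\}$. This proves $v^2/(2n)\in\ZZ$ for all $v\in K$ with $0<v^2<8n$ (the boundary cases $v^2\in\{0,8n\}$ being trivial).

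It remains to promote this to all of $K$, and here the singular weight must be used more fully, since the elliptic-element argument gives nothing when $v^2\le0$ or $v^2\ge8n$. The idea is to take the product expansion of $F$ at a cusp for which a norm-$2n$ vector $\lambda'$ in the coset $x$ is a simple root of the relevant Weyl chamber $W$ (such a chamber exists because $\sigma_{\lambda'}$ belongs to the reflection group attached to $\mathrm{div}(F)$): the vanishing of every Fourier coefficient of $F$ on a non-isotropic vector forces $\rho_W-\lambda'$ to be isotropic, whence $(\rho_W,\lambda')=-n$; feeding this constraint on the Weyl vector back into the computation above — equivalently, running the same finite-order argument against the entire $\Gamma_F$-orbit of norm-$2n$ reflective vectors of $\mathrm{div}(F)$, each of which yields an anti-invariance by Theorem~\ref{th:divisor}, and using the indefiniteness of $K$ to write an arbitrary $v\in K$ as a difference of two such vectors spanning a definite plane — one obtains $v^2/(2n)\in\ZZ$ for every $v\in K$. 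Collecting the quadratic and bilinear values on a basis of $K$ then gives $n\mid\mathrm{m}_K$.

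The main obstacle I anticipate is precisely this last step: handling vectors with $v^2\le0$ or $v^2\ge8n$, where one genuinely needs the support of the Fourier expansion to consist of isotropic vectors rather than only the discreteness of $\Gamma_F$. A secondary technical point is verifying that every coset $x$ with $c(x,-n)\ne0$ admits representatives in $K'$ of all the norms used in the argument; this is automatic once the rank of $K$ is at least $4$, and requires a small separate argument in the ternary case $l=3$. Everything else is routine bookkeeping with the divisor formula and the action of $\Orth^+(M\otimes\QQ)$ on the tube domain.
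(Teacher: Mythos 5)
Your setup is on the right track: you correctly produce, for each representative $\mu$ of $x$, a norm-$2n$ vector $\lambda_\mu$ on whose orthogonal complement $F$ vanishes, and you correctly deduce from Theorem \ref{th:divisor} that all the reflections $\sigma_{\lambda_\mu}$ lie in the group $\Gamma_F$ of Lemma \ref{lem:arithmetic}. But the way you extract arithmetic information from this is where the argument breaks. The elliptic-element/Niven computation only applies when $\langle\lambda_\mu,\lambda_{\mu'}\rangle$ is positive definite, and the Gram determinant is $4n^2-(2n-\tfrac12 v^2)^2$ with $v=\mu-\mu'$, which is positive exactly when $0<v^2<8n$; for $v^2\le 0$ or $v^2\ge 8n$ the product $\sigma_{\lambda_\mu}\sigma_{\lambda_{\mu'}}$ is parabolic or hyperbolic and can perfectly well have infinite order in a discrete group, so you get no constraint at all. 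Since $K$ has signature $(l-1,1)$, the vectors you fail to control are most of $K$ (in particular all negative-norm vectors), and you cannot evade this by choosing other representatives, because the Gram matrix of the pair depends only on $n$ and $v^2$. The "promotion" paragraph is not a proof: the Weyl-vector identity $(\rho_W,\lambda')=-n$ is asserted rather than derived, and the claim that an arbitrary $v\in K$ can be written as a difference of two reflective norm-$2n$ vectors spanning a definite plane contradicts the determinant computation above. So as it stands the argument proves $v^2/(2n)\in\ZZ$ only on the bounded shell $0<v^2<8n$, which does not give $n\mid\mathrm{m}_K$.

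The ingredient you are missing is Lemma \ref{lem:lattice}: $\Gamma_F$ preserves a rational lattice $M_F\supseteq M$ of the same rank (generated by finitely many translates $g(M)$). The paper takes $t$ minimal with $tv_r\in M_F$ and observes that $\sigma_{v_r}(tv_s)\in M_F$ forces $\frac{(v_s,v_r)}{m}\,tv_r\in M_F$, hence $(v_s,v_r)/m\in\ZZ$ for \emph{all} pairs $r,s$ in the coset, with no positivity restriction on the plane they span; since $(v_r,v_s)=2m-\tfrac12(r-s)^2$ and $r-s$ ranges over all of $K$, this gives $\mathrm{m}_K/m\in\ZZ$ at once. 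This is a purely lattice-theoretic use of discreteness (an invariant lattice) rather than a metric one (finite order of rotations), and that is precisely what lets it handle indefinite and degenerate planes. Two smaller points: the paper first replaces $(x,n)$ by $(dx,d^2n)$ with $d$ maximal such that $c(dx,-d^2n)\neq 0$, so that the multiplicity of $v_r^\perp$ is exactly $c(dx,-d^2n)$ and is visibly positive --- your assertion that holomorphy forces every principal-part coefficient to be non-negative is not the correct criterion (holomorphy constrains the divisor multiplicities $\sum_k c(kx,-k^2n)$, not the individual $c$); and since the conclusion for $d^2n$ implies it for $n$, nothing is lost. Within its range of validity your Niven argument is a nice observation, but it cannot be completed into a proof of the full statement without something playing the role of $M_F$.
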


\begin{proof}
Write vectors $\lambda\in U\oplus K'$ in the form $(a,r,b)$ with $a,b\in \ZZ$ and $v\in K'$, such that $\lambda^2=r^2-2ab$. Suppose $c(x,-n)\neq 0$, and let $d$ be the largest integer such that $c(dx,-d^2n)\neq 0$. For simplicity we set $y=dx \in K'/K$, $m=d^2n$, and $v_r:=(1,r,r^2/2-m)$ for any $r\in y+K$. Then $v_r^\perp$ has multiplicity $c(y,-m)$ in the divisor of $F$, so $c(y,-m)$ is a positive integer. 

Let $t$ be the minimal positive integer such that $tv_r$ lies in the lattice $M_F$ generated by $g(M)$, where $g$ runs through the representatives of $\Gamma_F / (\Gamma_F\cap \Orth^+(M))$ for $\Gamma=\widetilde{\Orth}^+(M)$ (as in Lemmas \ref{lem:arithmetic} and \ref{lem:lattice}). Since all vectors of type $v_r$ lie in a single $\widetilde{\Orth}^+(M)$-orbit, the integer $t$ depends only on the class $y\in K'/K$ and the number $m$. For any $r,s\in y+K$, we have $\sigma_{v_r}(tv_s) \in M_F$, which is equivalent to 
$$
\frac{(t v_s, v_r)}{m} v_r = \frac{(v_s, v_r)}{m} \cdot tv_r \in M_F.
$$
It follows that $(v_s, v_r)/m\in \ZZ$ for all $r,s\in y+K$. Therefore,  $\mathrm{m}_K/m \in \ZZ$ and $\mathrm{m}_K / (nd^2) \in \ZZ$. 
\end{proof}

\section{A proof of Theorem \ref{MTH3}}\label{sec:th3}
In this section we prove Theorem \ref{MTH3} as a corollary of Theorem \ref{MTH2} using the following results:

\begin{theorem}[Theorem 1.2 of \cite{Wan23b}]\label{th:wang}
\noindent
\begin{enumerate}
    \item There is no even lattice of signature $(l,2)$ with a reflective modular form when $l=21$ or $l\geq 23$ and $l\neq 26$.
    \item Let $M=U\oplus K$ be an even lattice of signature $(l,2)$ which has a reflective Borcherds product. If $l=26$, then $M\cong \II_{26,2}$ and if $l=22$, then $M\cong 2U\oplus D_{20}$. 
\end{enumerate}    
\end{theorem}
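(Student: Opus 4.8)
The plan is to translate the existence of a reflective modular form into an over-determined Diophantine problem for the principal part of its Borcherds input, where the weight is pinned by an Eisenstein series and the admissible divisors are confined to a root system of bounded size. First I would reduce to Borcherds products. For part (2) the hypothesis already provides a hyperbolic plane, and in general, since an even lattice of signature $(l,2)$ with $l\geq 3$ has Witt index two, one may reduce (after passing between commensurable lattices without changing $l$) to a lattice of the form $U\oplus U(m)\oplus L$ with $L$ positive definite even of rank $l-2$. Bruinier's converse theorem, quoted above, then identifies the reflective form $F$ with a Borcherds product $\Borch(f)$, and reflectivity forces the principal part $\sum_{n>0}\sum_x c(x,-n)q^{-n}\mathbf{e}_x$ of the input $f$ to be supported only on \emph{reflective} vectors: those classes $x\in M'/M$ and $n>0$ admitting $\lambda\in x+M$ with $\lambda^2=2n$ and $\sigma_\lambda\in\Orth^+(M)$.

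The engine of the proof is the obstruction principle. Pairing the principal part of $f$ against the vector-valued Eisenstein series $E_{1+l/2}$ of the dual Weil representation computes the constant term, giving the weight of $\Borch(f)$ as
$$
2k = \sum_{n>0}\sum_x c(x,-n)\,\bigl|b(x,n)\bigr|,
$$
where the $b(x,n)$ are the explicit coefficients of $E_{1+l/2}$ furnished by the Bruinier--Kuss formula in terms of local densities of the discriminant form. At the same time, descending to a one-dimensional cusp realizes the reflective hyperplanes of $F$ as roots of a root system inside the positive-definite lattice $L$; since $\rank(L)=l-2$, both the number of admissible classes $(x,n)$ and the norms $2n$ that can occur are bounded in terms of $l$. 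Existence of $f$ further demands that this same sparse principal part pair to zero with every cusp form of weight $1+l/2$, imposing a growing family of linear constraints.

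I would then run the numerics. The coefficients $\bigl|b(x,n)\bigr|$ are governed by Bernoulli numbers of weight $1+l/2$, whose denominators grow factorially, so for fixed bounded root data the right-hand side of the displayed identity decays with $l$ and cannot match the lower bound $2k\geq l-2$ valid for any non-constant modular form. A careful accounting of this imbalance, simultaneously with the cusp-form obstructions, excludes $l=21$ and every $l\geq 23$ with $l\neq 26$. For the two borderline ranks one enumerates the finitely many even positive-definite lattices $L$ of rank $l-2$ whose discriminant form can carry a reflective principal part of the correct weight: in rank $24$ this forces the Leech lattice $\Lambda$, whence $M\cong 2U\oplus\Lambda=\II_{26,2}$ and $\Borch(f)=\Phi_{12}$, while in rank $20$ it forces $L\cong D_{20}$, whence $M\cong 2U\oplus D_{20}$.

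The main obstacle is making the passage from ``reflective divisor'' to ``bounded root data'' genuinely quantitative, and in particular bounding the multiplicities $c(x,-n)$ themselves rather than only their supporting hyperplanes, since these are what the Eisenstein identity must balance. The rank $l=26$ is the sharpest case and resists any naive finite-covolume reflection-group argument: the reflection group of the associated hyperbolic lattice $\II_{1,25}$ has \emph{infinite} index, and $\Phi_{12}$ survives only because the Conway (Leech-lattice) symmetries preserve its divisor, so the analysis must be arithmetic rather than purely group-theoretic. A secondary difficulty is the uniform treatment of higher-norm reflective roots $\lambda^2=2n>2$, whose Eisenstein contributions and cusp geometry differ from those of the $(-2)$-roots.
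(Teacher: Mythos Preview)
This theorem is not proved in the present paper at all: it is quoted verbatim as Theorem~1.2 of \cite{Wan23b} and used as a black box to deduce Theorem~\ref{MTH3}. There is therefore no proof here to compare your proposal against.

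As a sketch of how the cited result is actually obtained, your outline is in the right spirit but is not yet a proof. The argument in \cite{Wan23b} (building on \cite{Wan18,Wan19}) does proceed by restricting a reflective form to one-dimensional cusps, reading off Jacobi forms whose theta decomposition forces the reflective hyperplanes to form a genuine root system in the positive-definite part, and then balancing the weight against explicit Eisenstein/representation-number data. However, several of your steps are stated too loosely to carry weight. Your claim that the Eisenstein coefficients ``decay with $l$'' because Bernoulli denominators grow is not the correct mechanism: the relevant quantities are ratios that must be analyzed prime by prime via local densities, and the exclusion of large $l$ comes from an inequality comparing the weight $k$ with the rank of the root system and the number of roots, not from a naive size estimate. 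Your reduction of part~(1) to lattices of the form $U\oplus U(m)\oplus L$ ``after passing between commensurable lattices'' glosses over exactly the step that requires care, since reflectivity is not obviously preserved under arbitrary commensurability moves. Finally, you yourself flag that bounding the multiplicities $c(x,-n)$ and handling higher-norm reflective roots are unresolved; these are not peripheral difficulties but the core of the argument in \cite{Wan23b}, and without them the proposal remains a plan rather than a proof.
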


Let $M$ be an even lattice of signature $(l,2)$ with $l\geq 21$ and suppose $F$ is a special modular form of singular weight for some finite-index subgroup $\Gamma$ of $\Orth^+(M)$. By Theorem \ref{MTH2}, Lemma \ref{lem:arithmetic} and Lemma \ref{lem:lattice}, $F$ is a reflective modular form for $\Gamma_F<\Orth^+(\mathbb{M})$. Theorem \ref{th:wang} implies that $l=22$ or $26$.  By \cite[Corollory 3.2]{Ma18}, there exists a lattice $\mathbb{M}_1$ in $\mathbb{M}\otimes\QQ$ such that $\Orth^+(\mathbb{M}) \subset \Orth^+(\mathbb{M}_1)$ and such that $\mathbb{M}_1$ is a scaling of an even lattice of type $\mathbb{M}_2=2U\oplus L$. The subgroup $\Gamma_F$ has finite index in $\Orth^+(\mathbb{M}_2)$, and we denote the index by $a$. Then the function
$$
\hat{F} = \prod_{g \in \Orth^+(\mathbb{M}_2) / \Gamma_F} F|g
$$
defines a reflective modular form of weight $a(l/2-1)$ for $\Orth^+(\mathbb{M}_2)$. 

When $l=26$, we conclude from Theorem \ref{th:wang} that $\mathbb{M}_2 \cong \II_{26,2}$ and $\hat{F}=\Phi_{12}^a$ up to a constant factor. By Theorem \ref{MTH1}, $F$ has only simple zeros. Therefore, $\Phi_{12} / F$ is a (holomorphic) modular form of weight $0$ for $\Gamma_F$ and thus constant. 

When $l=22$, Theorem \ref{th:wang} implies that $\mathbb{M}_2 \cong 2U\oplus D_{20}$. Borcherds \cite{Bor00} constructed a reflective modular form $\Psi_{24}$ of weight $24$ for $\Orth^+(2U\oplus D_{20})$ which vanishes on $v^\perp$ with multiplicity $1$ and vanishes on $u^\perp$ with multiplicity $8$, where $v\in 2U\oplus D_{20}$ with $v^2=2$ and $u\in 2U\oplus D_{20}'$ with $u^2=1$. We know from \cite[Lemma 3.2]{Wan23b} that $\Psi_{24}$ is the unique reflective modular form for $\Orth^+(2U\oplus D_{20})$ up to a power. It follows that $\hat{F}=\Psi_{24}^d$ for some positive integer $d$. Recall that $F$ has only simple zeros. By comparing the weights and zero orders of $F$ and $\Psi_{24}$, we find
$$
10a = 24 d \quad \text{and} \quad 8d \leq a,
$$
which leads to $10a/3 \leq a$, a contradiction.  This finishes the proof of Theorem \ref{MTH3}. 

\bibliographystyle{plainnat}
\bibliofont
\bibliography{refs}

\end{document}